\newcommand{\fr}{\displaystyle\frac}
\newcommand{\jf}{\displaystyle\int}
\newcommand{\lt}{\left}
\newcommand{\rt}{\right}
\newcommand{\lm}{\lambda}
\newcommand{\mb}{\mbox}
\newcommand{\tm}{\times}
\newcommand{\pl}{\partial}
\newcommand{\abs}[1]{\lvert#1\rvert}
\newcommand{\Abs}[1]{\left\lvert#1\right\rvert}
\newcommand{\R}{\mathbb{R}}
\newcommand{\cL}{\mathcal{L}}
\newtheorem{theorem}{Theorem}[section]
\newtheorem{lemma}[theorem]{Lemma}
\newtheorem{corollary}[theorem]{Corollary}
\theoremstyle{definition}
\newtheorem{definition}[theorem]{Definition}
\theoremstyle{remark}
\newtheorem{remark}{Remark}[section]
\newtheorem{counterexample}{Counterexample}
\theoremstyle{remark}
\numberwithin{equation}{section}
\begin{document}
\title{Liouville type theorems for  dual nonlocal evolution equations involving Marchaud derivatives}



\author{Yahong Guo, Lingwei Ma, Zhenqiu Zhang\thanks{Corresponding author}}

\date{\today}
\maketitle
\begin{abstract}
In this paper, we establish a Liouville type theorem for the homogeneous dual fractional parabolic   equation
\begin{equation*}
  \partial^\alpha_t u(x,t)+(-\Delta)^s u(x,t) = 0\ \ \mbox{in}\ \ \mathbb{R}^n\times\mathbb{R},
\end{equation*}
where $0<\alpha,s<1$.
By employing
a method of Fourier analysis, we prove that all solutions  in the sense of distributions must be affine, and hence  constant in the case $0<s\leq\frac{1}{2}.$  In the case $\frac{1}{2}<s < 1$, under the asymptotic assumption
$$\liminf_{|x|\rightarrow\infty}\frac{u(x,t)}{|x|^\gamma}\geq 0 \; ( \mbox{or} \; \leq 0)  \,\,\mbox{for some} \;0\leq\gamma\leq 1,  $$
 all the solutions  must be constant.
 Our result includes the previous Liouville theorems  on $s$-harmonic
  functions \cite{CDL}
 as special cases and it is still  novel even restricted to one-sided  Marchaud fractional equation.
Our methods  can be applied to a
 variety of dual nonlocal parabolic problems.

As an application of the above Liouville theorem, we establish an equivalence between the pseudo-differential equations involving Marchaud fractional derivatives and the corresponding integral equations.
\bigskip

{\em Mathematics Subject classification} (2020): 35R11; 35K05; 47G30; 35B53.

\bigskip

{\em Keywords:} dual fractional parabolic equation; Marchaud derivative;  solutions in the sense of distributions; Liouville theorem; Fourier transform; sharp estimates.\end{abstract}

\maketitle

\section{Introduction}\label{sec1}

In this paper, we investigate solutions of the  dual fractional parabolic equations
\begin{equation}\label{B}
\partial^\alpha_t u(x,t)+(-\Delta)^s u(x,t) = 0\ \ \mbox{in}\ \ \R^n\times\R .
\end{equation}
Under mild conditions, we prove that all solutions are constant and thus establish a Liouville type
 theorem for this problem.

Here, the one-sided nonlocal time derivative $\partial_t^\alpha$,   sometime also denoted by $D_{\rm{left}}^\alpha$, is defined as
\begin{equation}\label{1.00}
\partial^\alpha_t u(x,t)=C_\alpha\jf_{-\infty}^t\fr{u(x,t)-u(x,\tau)}{(t-\tau)^{1+\alpha}}d\tau,
\end{equation}
with $0<\alpha<1,  \mb{and~} C_\alpha:=\frac{1}{\Abs{\Gamma(-\alpha)}}$ with $\Gamma$ being the Gamma function.
This fractional time derivative
 is known as the Marchaud fractional (left) derivative  of order $\alpha$, depending on the values of the function  from the past time.
  The spatial nonlocal elliptic pseudo-differential operator, the fractional Laplacian $(-\Delta)^s$
is defined by
 \begin{equation}\label{1.0}
(-\Delta)^su(x,t)= C_{n,s}P.V.\jf_{\R^{n}}\fr{u(x,t)-u(y,t)}{\abs{x-y}^{n+2s}}dy,
\end{equation}
where $0 < s < 1$, $C_{n,s} $ is a normalization positive constant and $P.V.$ stands for the Cauchy principal
value.
We notice that the operator $\partial_t^\alpha+(-\Delta)^s$ in equation \eqref{B} is nonlocal  in both space and time,
 since the value of $[\partial_t^\alpha+(-\Delta)^s] u$ at a given point $(x,t)$ depends on the values of $u$ over
  the whole space $\mathbb{R}^n$  as well as on all the past time before $t$.
Moreover, this dual nonlocal operator  is reduced to the local heat
 operator in the sense that, for each $(x,t)\in \R^n\times\R$, \[[\partial_t^{\alpha} + (-\Delta)^s]u(x,t)\to[\partial_t - \Delta]u(x,t),\] as \(\alpha \to 1^-\) and \(s \to 1^-\).

The dual space-time nonlocal operator arises   in    various physical and probabilistic models,
 including
 magneto-thermoelastic heat conduction, plasma turbulence,  anomalous diffusion,
  chaotic dynamics as discussed in \cite{dCN1,dCN,EE,KBS,Z}.
 A typical application of equation \eqref{B}
is presented in \cite{dCN} to simulate transport of tracer particles in plasma, where the function $u$ is the
 probability density for tracer particles representing the likelihood of finding a particle
at time $t$ and position $x$. In this case, the fractional diffusion operator $(-\Delta)^s$  accounts for
avalanche-like transport that may occur, while the one-sided  fractional time derivative  $\partial^\alpha_t$ accounts for
trapping effect of turbulent eddies. 
The dual fractional kinetic equations \eqref{B} can also be derived from the context of continuous-time random walks \cite{MK},
which a generalization of  Brownian random walks that describe particles experience uncorrelated
random movements at fixed time intervals.
A fractional diffusion operator corresponds to a diverging jump length variance
in the random walk, and a fractional time derivative arises when the characteristic
waiting time diverges. A fractional time derivative is used to model processes that exhibit memory effects.

Liouville type theorems play  crucial roles in deriving a priori estimates and in establishing
qualitative properties of solutions, including their existence, nonexistence, and uniqueness. Therefore,
 these type of theorems  has been widely studied for various types of local or nonlocal elliptic and parabolic equations using different
  methods, such as  Harnack inequalities, blowing up and compactness arguments, as well as
  Fourier analysis (cf. \cite{BKN, CDL, CW,Fa,GMZ,Hir,Mos,MGZ,  Ser,WuC} and the
references therein).

With respect to the classical fractional elliptic equation
 \begin{equation}\label{C}
 (-\Delta)^s u(x)=0~\mb{in}~\R^n,\end{equation}
Bogdan, Kulczycki and  Nowak \cite{BKN} showed that if u is a nonnegative solution of equation \eqref{C}, then it must be constant. Subsequently, Chen, D' Ambrosio and Li \cite{CDL} weaken
this one-sided boundedness condition to
\begin{equation}\label{asymp}
 \liminf_{|x|\rightarrow\infty}\frac{u(x)}{|x|^\gamma}\geq 0 \,\,\mbox{for}\,\, 0\leq\gamma\leq\min\{1,2s\}.
\end{equation}

As far as we know, there is still a lack of research on the Liouville-type theorem  for one-sided Marchaud fractional  equation\begin{equation}\label{D}
 \partial_t^\alpha u(t)=0~\mb{in}~\R.\end{equation} In recent work \cite{GMZ}, the authors employed a direct perturbation method to prove that the bounded solutions of equation \eqref{D} must be constant.
Then  it is natural to ask:
 \[{ Can~ this~ boundedness ~condition ~ be~ relaxed? }\]
 Here we answer this question affirmatively  as a special case of our main theorem.

 In the context of nonlocal  parabolic equation,
Serra \cite{Ser} showed that all bounded solutions  of  fractional parabolic equation\begin{equation*}
     \partial_tu(x,t)+(-\Delta)^s u(x,t)=0 \; ~\mb{in}~ \mathbb{R}^n\times(-\infty,0]
  \end{equation*} must be constant under  the following   growth conditions
$$\|u\|_{L^\infty(Q_R^{2s})}\leq C R^\beta\,\, \mbox{for all}  \,\, R\geq1 \,\,\mbox{and some}\,\, 0<\beta<\min\{1, 2s\},$$
where $Q_R^{2s}=\{(x,t)\mid |x|< R \,\,\mbox{and}\,\, -R^{2s}<t<0\}$ denotes a parabolic cylinder.

However, for dual fractional parabolic equations \eqref{B} simultaneously involving the Marchaud derivatives $\partial_t^\alpha$
and the fractional Laplacian $(-\Delta)^s$, there have seen very few results in this respect except a recent one in
\cite{GMZ} which asserts that  all bounded classical  entire solutions of  \eqref{B}  must be constant.
Then, is this result  still true for unbounded   tempered distributions? 
To answer the question is  the primary objective of our research here. By employing  Fourier analysis on tempered distributions, we prove that all the solutions of \eqref{B} in the sense of distributions must be affine, and hence constant in the case $0<s\leq \frac{1}{2}.$ While in the case  $\frac{1}{2}<s<1$, under the mild assumption
\begin{equation}\label{asym}\liminf_{|x|\rightarrow\infty}\frac{u(x,t)}{|x|^\gamma}\geq 0 \; ( \mbox{or} \; \leq 0)  \,\,\mbox{for some} \;0\leq\gamma\leq 1,\end{equation}
 all the solutions  must also be constant. In particular in the case $u$ is independent of $t$, our equation \eqref{B} reduced to fractional Laplace equation \eqref{C}, and therefore our results here contain the results in both \cite{BKN} and \cite{CDL} as special cases.

In order to apply the Fourier Analysis to the distributional solutions,
 we first  choose  an  applicable   space of tempered distributions
$$\mathcal{L}_{2s,\alpha}(\mathbb{R}^n\times\mathbb{R})=\left\{u(x,t) \in L^1_{\rm loc} (\mathbb{R}^n\times\mathbb{R}) \mid \int_{-\infty}^{+\infty} \int_{\mathbb{R}^n} \frac{|u(x,t)|}{(1+|x|^{n+2s})(1+|t|^{1+\alpha})}\operatorname{d}\!x\operatorname{d}\!t<+\infty\right\}, $$
which is  an optimal   functions space  for defining a  solution of equation \eqref{B} in the sense of distributions.
Then under    asymptotic assumption  \eqref{asym},
 we prove that all distributional solutions $u\in\mathcal{L}_{2s,\alpha}(\mathbb{R}^n\times\mathbb{R})$ of equation \eqref{B} must be constant.

It is worth  emphasizing that the space $\mathcal{L}_{2s,\alpha}(\mathbb{R}^n\times\mathbb{R})$ is clearly a subspace of  $\mathcal{L}_{2s}(\mathbb{R}^n)\times\cL_\alpha^-(\mathbb{R})$ which is a natural space we usually used in the setting of classical entire solutions
 to  guarantee that the singular integral in  \eqref{1.00} and \eqref{1.0}  are
well defined.     Here,  the slowly increasing function spaces $\cL_{2s}$ and $\cL^{-}_\alpha(\R)$ are defined respectively by

 $$\cL_{2s}(\R^n):=\lt\{v\in L^1_{loc}(\R^n) \ | \int_{\mathbb{R}^n} \frac{|v(x)|}{1+|x|^{n+2s}} dx < +\infty\rt\}$$
 and
 $$\cL^{-}_\alpha(\R):=\lt\{v\in L^1_{loc}(\R)\ | \int_{-\infty}^{t} \frac{|v(\tau)|}{1+|\tau|^{1+\alpha}} d\tau < +\infty \text{ for\ each}\  t\in\mathbb{R}\rt\}.$$

In the special case when    $u$ is only a function of $x$, i.e., $u(x,t)=u(x)$, then
\[\int_{-\infty}^{+\infty}\int_{\mathbb{R}^n} \frac{|u(x)|}{(1+|x|^{n+2s})(1+|t|^{1+\alpha})}\operatorname{d}\!x\operatorname{d}\!t\sim\int_{\mathbb{R}^n} \frac{|u(x)|}{1+|x|^{n+2s}}\operatorname{d}\!x.\]
  Thus the space $\mathcal{L}_{2s,\alpha}(\mathbb{R}^n\times\mathbb{R})$  is reduced to $\cL_{2s}(\R^n)$
and 
equation \eqref{B} becomes  the  fractional elliptic equation \eqref{C}.
 Hence, our result  includes the Liouville theorem for $s$-harmonic functions established in \cite{CDL} as a special case.
Similarly, if $u(x,t)=u(t)$, then $\mathcal{L}_{2s,\alpha}(\mathbb{R}^n\times\mathbb{R})$ is consistent with the space
$$\mathcal{L}_{\alpha}(\R)=\left\{u(t) \in L^1_{\rm loc} (\mathbb{R}) \mid \int_{-\infty}^{+\infty} \frac{|u(t)|}{1+|t|^{1+\alpha}}\operatorname{d}\!t<\infty\right\}\subset\mathcal{L}_{\alpha}^{-}(\R),$$
and 
  equation \eqref{B} turns into  one-sided  Marchaud fractional equation \eqref{D}.
 In this case, our results imply that  the solution  $u\in\cL_\alpha(\R)$ of \eqref{C} in the sense of distributions are constant and  thus remove the bounded-ness condition
 in \cite{GMZ} and \cite{MGZ},  even the asymptotic condition \eqref{asym} is not needed. From this point of view, our finding is novel even restricted to one-sided  Marchaud fractional equation \eqref{D}.

Based on the Liouville type theorem for Marchaud fractional equation \eqref{D}, we further consider nonhomogeneous equation

\begin{equation}\label{nonhom}
 \partial_t^\alpha u(t)=f(t)\ ~\mb{in}\ ~\R.\end{equation}
 and establish the integral representation of the  solutions
\begin{equation}\label{repret}
u(t)=c_{\alpha}\int_{-\infty}^t\frac{f(\tau)}{(t-\tau)^{1-\alpha}}d\tau,\,\, \mb{with}\,\, c_\alpha:=\frac{1}{\Gamma(\alpha)}.
\end{equation}
\medskip

We are now ready to state the main result of this paper.
\begin{theorem}\label{Liouville}
Let $0<s,\alpha<1$ and $n\geq2$.
Suppose that
$u\in\mathcal{L}_{2s,\alpha}(\mathbb{R}^n\times\mathbb{R}) $
is a solution of
\begin{equation}\label{model-Lio}
\partial_t^\alpha u(x,t)+(-\Delta)^s u(x,t) =0, \; \,\,(x, t)\in \mathbb{R}^n\times \mathbb{R},
\end{equation}
 in the sense of distributions.

In the case $\frac{1}{2}<s < 1$, we assume additionally that
\begin{equation}\label{AA}
  \liminf_{|x|\rightarrow\infty}\frac{u(x,t)}{|x|^\gamma}\geq 0 \; ( \mbox{or} \; \leq 0) \,\,\mbox{for some} \;0\leq\gamma\leq 1.
\end{equation}
Then $u$ must be a constant.
\end{theorem}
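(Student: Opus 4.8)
The plan is to pass to the Fourier transform in the variables $(x,t)$ and to exploit that the full symbol of $\partial^\alpha_t+(-\Delta)^s$ never vanishes away from the frequency origin: this forces $\widehat u$ to be supported at a single point, hence $u$ to be a polynomial, after which the membership $u\in\mathcal L_{2s,\alpha}(\mathbb R^n\times\mathbb R)$ — strengthened by \eqref{AA} when $\tfrac12<s<1$ — collapses that polynomial to a constant. Concretely, first note that $\mathcal L_{2s,\alpha}(\mathbb R^n\times\mathbb R)\subset\mathcal S'(\mathbb R^n\times\mathbb R)$ because the weight $\big(1+|x|^{n+2s}\big)^{-1}\big(1+|t|^{1+\alpha}\big)^{-1}$ decays, so that $\widehat u$ is a well-defined tempered distribution. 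Computing the Fourier multiplier of the Marchaud derivative via the substitution $\tau=t-r$,
\begin{equation*}
\widehat{\partial^\alpha_t v}(\tau)=C_\alpha\,\widehat v(\tau)\int_0^{\infty}\frac{1-e^{-ir\tau}}{r^{1+\alpha}}\,dr=(i\tau)^\alpha\,\widehat v(\tau),
\end{equation*}
where $C_\alpha=|\Gamma(-\alpha)|^{-1}$ is precisely the constant normalizing the prefactor to $1$ and $(i\tau)^\alpha$ is the principal branch, which lies in the open right half-plane for $\tau\neq0$; together with $\widehat{(-\Delta)^s v}(\xi)=|\xi|^{2s}\widehat v(\xi)$ this shows that \eqref{model-Lio}, read in the sense of distributions, is equivalent to
\begin{equation*}
m(\xi,\tau)\,\widehat u(\xi,\tau)=0,\qquad m(\xi,\tau):=|\xi|^{2s}+(i\tau)^\alpha .
\end{equation*}

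The key step is the elementary observation that, since $0<\alpha<1$ gives $\cos\tfrac{\pi\alpha}{2}>0$,
\begin{equation*}
\operatorname{Re}m(\xi,\tau)=|\xi|^{2s}+\cos\tfrac{\pi\alpha}{2}\,|\tau|^\alpha\ \ge\ 0,
\end{equation*}
with equality only at $(\xi,\tau)=(0,0)$; thus $m$ is continuous and zero-free on $\mathbb R^{n+1}\setminus\{0\}$. From $m\,\widehat u=0$ one then concludes $\operatorname{supp}\widehat u\subseteq\{(0,0)\}$, and the structure theorem for distributions supported at a point gives $\widehat u=\sum_{|\beta|\le N}c_\beta\,\partial^\beta\delta_{(0,0)}$ for some finite $N$; taking inverse Fourier transforms, $u$ is a polynomial in $(x,t)$.

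To conclude, let $u=P(x,t)$ be a polynomial lying in $\mathcal L_{2s,\alpha}(\mathbb R^n\times\mathbb R)$. Finiteness of the defining integral forces the degree of $P$ in $t$ to be strictly less than $\alpha<1$, hence $P=P(x)$ is independent of $t$, and then the degree of $P$ in $x$ to be strictly less than $2s$. If $0<s\le\tfrac12$ this already forces $P$ to be constant, with no extra hypothesis needed. If $\tfrac12<s<1$, one gets only $u(x,t)=a\cdot x+b$ for some $a\in\mathbb R^n$, $b\in\mathbb R$ (these are genuine solutions, as $(-\Delta)^s$ annihilates affine functions and $\partial^\alpha_t$ annihilates $t$-independent ones); evaluating \eqref{AA} along the ray $x=\mp R\,a/|a|$ as $R\to\infty$ gives $\liminf_{|x|\to\infty}\frac{a\cdot x+b}{|x|^\gamma}$ equal to $-\infty$ when $0\le\gamma<1$ and to $-|a|$ when $\gamma=1$ (respectively $+\infty$ and $+|a|$ under the other sign in \eqref{AA}), which contradicts \eqref{AA} unless $a=0$. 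Hence $u$ is constant in every case.

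The main obstacle I expect is the implication ``$m\,\widehat u=0\Rightarrow\operatorname{supp}\widehat u\subseteq\{(0,0)\}$''. The difficulty is that $m$ is not smooth across the hyperplanes $\{\xi=0\}$ and $\{\tau=0\}$ and carries a branch cut in $\tau$, so one cannot simply divide by $m$ globally; instead the support statement has to be proved by a local argument near each point of $\mathbb R^{n+1}\setminus\{0\}$ — cutting off and constructing a local parametrix for $m$, using $\operatorname{Re}m>0$ there — together with care about the precise distributional formulation of the equation (for $u\in\mathcal L_{2s,\alpha}$ both $\partial^\alpha_t u$ and $(-\Delta)^s u$ are themselves tempered distributions, and it is their identity that is Fourier transformed). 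Everything else reduces to the one-line multiplier computation and an elementary count of degrees.
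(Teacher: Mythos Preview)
Your proposal is correct and follows essentially the same route as the paper: pass to the Fourier side, use that $\operatorname{Re}m(\xi,\rho)=|\xi|^{2s}+|\rho|^\alpha\cos\tfrac{\pi\alpha}{2}>0$ away from the origin to force $\operatorname{supp}\widehat u\subset\{(0,0)\}$, conclude that $u$ is a polynomial, and then let the membership $u\in\mathcal L_{2s,\alpha}$ (together with \eqref{AA} when $\tfrac12<s<1$) cut that polynomial down to a constant. The paper carries out the support step by exactly the device you anticipate in your final paragraph: given $\psi\in C_0^\infty(\mathbb R^{n+1}\setminus\{0\})$, it sets $\mathcal F\varphi=\psi/m$ and then reads off $\langle\mathcal F u,\psi\rangle=\langle u,[D_{\rm right}^\alpha+(-\Delta)^s]\varphi\rangle=0$ from the distributional-solution identity \eqref{def-distribution soln}; so your ``local parametrix for $m$'' is precisely what the paper does, and the obstacle you flag is not a genuine obstruction.
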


We say that $u\in\mathcal{L}_{2s,\alpha}(\mathbb{R}^n\times\mathbb{R})$ is a solution of (\ref{model-Lio}) in the sense of distributions,  if
\begin{equation}\label{def-distribution soln}
\int_{-\infty}^{+\infty} \int_{\mathbb{R}^n} u(x,t)\overline{[D_{\rm{right}}^\alpha+(-\Delta)^s]\varphi(x,t)}\operatorname{d}\!x\operatorname{d}\!t = 0
\end{equation}
for any $\varphi\in \mathcal{S}(\mathbb{R}^n\times\mathbb{R})$,  the Schwartz space in $\R^n\tm\R$, and
$$
D_{\rm{right}}^\alpha\varphi(x,t)
:=C_{\alpha}\int_{t}^{+\infty}
  \frac{\varphi(x,t)-\varphi(y,\tau)}{(\tau-t)^{1+\alpha}}\operatorname{d}\!\tau
$$
denotes  the Marchaud fractional (right) derivative depending on the values of the function in the future time. In the subsequent section, we will explain that      $\mathcal{L}_{2s,\alpha}(\mathbb{R}^n\times\mathbb{R})$ is an appropriate and optimal distributional space to ensure that  \eqref{def-distribution soln} is well-defined.
\begin{remark}
It is noteworthy that  asymptotic assumption \eqref{AA} of $u$ in Liouville Theorem \ref{Liouville} is necessary. When this condition is violated, there are counter examples, such as the function $u(x,t)=x_1$, which is a nonconstant solution of dual fractional parabolic equation \eqref{model-Lio} and does not satisfy \eqref{AA}.
\end{remark}

\begin{remark}
In particular, if $u$ is bounded, then $u$ belongs to $\mathcal{L}_{2s,\alpha}(\mathbb{R}^n\times\mathbb{R})$ and also satisfies the asymptotic assumption \eqref{AA}. Therefore,   Theorem \ref{Liouville} further generalized the Liouville theorem  for the dual fractional parabolic equation in the whole space established  in  \cite{GMZ}.
\end{remark}
As discussed earlier, in the case where $u(x,t)=u(t)$,  Theorem \ref{Liouville} can be  reduced to the following Liouville type theorem involving a one-sided Marchaud fractional time
operator $\partial_t^\alpha$ without any additional assumptions.
\begin{corollary}\label{Liouville-M}
Let $0<\alpha<1$.
Suppose that
$u\in\mathcal{L}_{\alpha}(\R)$
is a solution of
\begin{equation}\label{model-Lio-M}
\partial_t^\alpha u(t) =0\; \,\,\mb{in}~  \mathbb{R},
\end{equation}
 in the following sense\begin{equation*}\label{def-distribution soln-M}
\int_{-\infty}^{+\infty}  u(t)\overline{D_{\rm{right}}^\alpha\varphi(t)}\operatorname{d}\!t = 0,~\forall \varphi\in \mathcal{S}(\R).
\end{equation*} Then $u$ must be a constant.
\end{corollary}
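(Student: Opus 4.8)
\medskip
\noindent\textbf{Proof proposal.}
The plan is to run the Fourier-analytic scheme of Theorem~\ref{Liouville} in the degenerate situation with no spatial variable, where it collapses to something almost elementary. First, observe that $u\in\mathcal{L}_\alpha(\R)$ is automatically a tempered distribution: for $\psi\in\mathcal{S}(\R)$ one has $|u(t)\psi(t)|=\frac{|u(t)|}{1+|t|^{1+\alpha}}\,(1+|t|^{1+\alpha})|\psi(t)|$, and the second factor is bounded. As discussed in the preceding section, the operators $D_{\rm left}^\alpha=\partial_t^\alpha$ and $D_{\rm right}^\alpha$ are formal adjoints, and for $\varphi\in\mathcal{S}(\R)$ the function $D_{\rm right}^\alpha\varphi$ is smooth, decays rapidly as $t\to+\infty$, and decays like $|t|^{-1-\alpha}$ as $t\to-\infty$ --- exactly the rate needed to pair with an arbitrary element of $\mathcal{L}_\alpha(\R)$. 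Hence the hypothesis of Corollary~\ref{Liouville-M} says precisely that $\partial_t^\alpha u=0$ in $\mathcal{S}'(\R)$.

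Next, pass to the Fourier side. A direct computation, which comes down to the elementary identity $C_\alpha\int_0^{\infty}\frac{1-e^{-ir\xi}}{r^{1+\alpha}}\,dr=(i\xi)^\alpha$ (principal branch), shows that the Fourier multiplier of $\partial_t^\alpha$ is $P_\alpha(\xi)=(i\xi)^\alpha$, so that $|P_\alpha(\xi)|=|\xi|^\alpha$; in particular $P_\alpha$ is continuous on $\R$, smooth and nowhere vanishing on $\R\setminus\{0\}$, and $P_\alpha(0)=0$. Likewise $\widehat{D_{\rm right}^\alpha\varphi}=\overline{P_\alpha}\,\widehat\varphi$, so Parseval's identity turns the equation into $P_\alpha(\xi)\,\widehat u(\xi)=0$ on $\R$. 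Since $1/P_\alpha$ is smooth on $\R\setminus\{0\}$, a routine division argument --- given any bump function supported away from the origin, divide it by $P_\alpha$ and take the inverse transform to produce an admissible test function --- shows $\widehat u$ vanishes on $\R\setminus\{0\}$, i.e. $\operatorname{supp}\widehat u\subseteq\{0\}$. By the structure theorem for distributions supported at a point, $\widehat u=\sum_{k=0}^{N}a_k\,\delta_0^{(k)}$ for some $N$, and inverting the Fourier transform forces $u$ to be a polynomial of degree at most $N$.

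Finally, use the growth restriction built into $\mathcal{L}_\alpha(\R)$ to eliminate all nonconstant polynomials: if $u$ had degree $d\ge1$, then $\frac{|u(t)|}{1+|t|^{1+\alpha}}\ge c\,|t|^{d-1-\alpha}\ge c\,|t|^{-\alpha}$ for $|t|$ large, which is not integrable at infinity because $\alpha<1$, contradicting $u\in\mathcal{L}_\alpha(\R)$; hence $d=0$ and $u$ is constant. There is no serious obstacle in this argument: the only point needing care is the first step --- checking that $D_{\rm right}^\alpha\varphi$ has enough decay and regularity to be paired with every $u\in\mathcal{L}_\alpha(\R)$ and that Parseval legitimately converts \eqref{model-Lio-M} into $P_\alpha\widehat u=0$ --- and this is exactly the content of the earlier discussion singling out $\mathcal{L}_\alpha(\R)$ (and $\mathcal{L}_{2s,\alpha}$) as the optimal distributional space. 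Alternatively, the corollary follows from Theorem~\ref{Liouville} directly by lifting: given $u\in\mathcal{L}_\alpha(\R)$, the $x$-independent function $\tilde u(x,t):=u(t)$ lies in $\mathcal{L}_{2s,\alpha}(\R^2\times\R)$, solves \eqref{model-Lio} in the sense of distributions (since $\int_{\R^2}(-\Delta)^s\varphi\,dx=0$ and integrating out $x$ reduces the test condition to the one in Corollary~\ref{Liouville-M}), and trivially satisfies \eqref{AA} with $\gamma=1$ when $\tfrac12<s<1$; Theorem~\ref{Liouville} then gives that $\tilde u$, hence $u$, is constant.
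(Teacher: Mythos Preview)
Your proposal is correct and follows essentially the same route as the paper: the paper obtains Corollary~\ref{Liouville-M} by specializing Theorem~\ref{Liouville} to $x$-independent functions, and its proof of Theorem~\ref{Liouville} is precisely the Fourier-division argument you spell out (divide a bump function supported away from the origin by the multiplier, invert, and pair against $u$ to get $\operatorname{supp}\widehat u\subset\{0\}$, hence $u$ is a polynomial, then kill nonconstant terms via the $\mathcal{L}_\alpha$ integrability). Your ``alternative'' lifting argument is exactly the reduction the paper invokes, so both of your routes coincide with the paper's.
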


The proof of  Theorem \ref{Liouville} is based on Fourier analysis which involves the Fourier transform of
 distributions. It requires us  to find an optimal subspace of tempered distributions,
 $\mathcal{L}_{2s,\alpha}(\mathbb{R}^n\times\mathbb{R})$, in which fractional time
 derivatives $D_{\rm{left}}^\alpha$ and fractional Laplacian $(-\Delta)^s$ make sense.
  To this end, it is imperative  to derive a key estimate on
the decay rate of $\lt[D_{\rm{right}}^\alpha+(-\Delta)^s\rt]\varphi(x,t)$ for the functions in Schwartz space.
\begin{lemma}\label{lm1}
If $\varphi(x,t)\in \mathcal{S}(\R^{n}\times \R),$ then $\lt[D_{\rm{right}}^\alpha+(-\Delta)^s\rt]\varphi\in C^{\infty}(\R^n\times \R)$ and
\begin{equation}\label{optimal}\Abs{\lt[D_{\rm{right}}^\alpha+(-\Delta)^s\rt]\varphi(x,t)}\leq \fr{C}{(1+\abs{x}^{n+2s})(1+\abs{t}^{1+\alpha})} ~\mb{in~} \R^n\tm\R.\end{equation}
\end{lemma}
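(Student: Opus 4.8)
The plan is to treat the two operators separately: prove that $(-\Delta)^s\varphi$ decays like $(1+|x|^{n+2s})^{-1}$ uniformly in $t$ (with the $t$-variable playing a passive role, since the fractional Laplacian acts only in $x$ and $\varphi$ is Schwartz in all variables), and that $D_{\rm{right}}^\alpha\varphi$ decays like $(1+|t|^{1+\alpha})^{-1}$ uniformly in $x$; then combine. The smoothness claim $[D_{\rm{right}}^\alpha+(-\Delta)^s]\varphi\in C^\infty$ follows from differentiation under the integral sign, which is justified by the same pointwise bounds on the integrands and their $x,t$-derivatives that we will establish for the decay estimate (all derivatives of $\varphi$ are again Schwartz, so nothing new is needed).

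For the spatial term, the standard approach is to split $(-\Delta)^s\varphi(x,t)=C_{n,s}\,\mathrm{P.V.}\int_{\mathbb{R}^n}\frac{\varphi(x,t)-\varphi(y,t)}{|x-y|^{n+2s}}\,dy$ into the region $|x-y|\le 1$ and its complement. On the near region, a second-order Taylor expansion of $\varphi(\cdot,t)$ at $x$ kills the linear term by symmetry and leaves $\int_{|x-y|\le 1}\frac{|D^2_x\varphi|\,|x-y|^2}{|x-y|^{n+2s}}\,dy$, which is finite and, because $\varphi$ is Schwartz, bounded by $C_N(1+|x|)^{-N}$ for any $N$ — in particular by $C(1+|x|^{n+2s})^{-1}$. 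On the far region, for $|x|$ large one further splits into $|y|\le |x|/2$ (where $|x-y|\gtrsim|x|$, giving a bound $C|x|^{-n-2s}\|\varphi(\cdot,t)\|_{L^1}+\ C|\varphi(x,t)|\int_{|y|\le|x|/2}|x-y|^{-n-2s}dy$) and $|y|>|x|/2$ (where $\varphi(y,t)$ and $\varphi(x,t)$ are both rapidly decaying). All the $L^1$, $L^\infty$ norms of $\varphi$ and its derivatives in $x$ are finite and bounded uniformly in $t$ (indeed they decay in $t$), so the spatial estimate is $|(-\Delta)^s\varphi(x,t)|\le C(1+|x|^{n+2s})^{-1}$, and by throwing in extra Schwartz decay we may even upgrade to the product form $C(1+|x|^{n+2s})^{-1}(1+|t|^{1+\alpha})^{-1}$.

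For the temporal term $D_{\rm{right}}^\alpha\varphi(x,t)=C_\alpha\int_t^\infty\frac{\varphi(x,t)-\varphi(x,\tau)}{(\tau-t)^{1+\alpha}}\,d\tau$, the same philosophy applies in one dimension. Split at $\tau-t=1$: near $t$ use a first-order Taylor expansion in $\tau$, so the integrand is $O(|\partial_t\varphi|(\tau-t)^{-\alpha})$, integrable on $(t,t+1)$ and bounded by a rapidly decaying function of $(x,t)$; far from $t$, bound $|\varphi(x,t)-\varphi(x,\tau)|$ by $|\varphi(x,t)|+|\varphi(x,\tau)|$, where the first piece times $\int_1^\infty(\tau-t)^{-1-\alpha}d\tau=\alpha^{-1}$ gives $C|\varphi(x,t)|$, and the second piece is $C\int_{t+1}^\infty|\varphi(x,\tau)|(\tau-t)^{-1-\alpha}d\tau$, which one estimates using the Schwartz decay of $\varphi$ in $\tau$. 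Since $\varphi$ is Schwartz, each of these pieces is $\le C(1+|x|^{n+2s})^{-1}(1+|t|^{1+\alpha})^{-1}$ (again overshooting any polynomial rate we like). Adding the two estimates yields \eqref{optimal}.

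The main obstacle is the bookkeeping in the far-field temporal estimate: when $t$ is large and negative, the integration variable $\tau$ ranges over $(t,\infty)$ which includes the region $\tau\approx 0$ where $\varphi$ need not be small, so the naive bound $|\varphi(x,\tau)|\le C_N(1+|\tau|)^{-N}$ does not directly produce decay in $|t|$; one must instead exploit the weight $(\tau-t)^{-1-\alpha}\le C(1+|t|)^{-1-\alpha}$ valid on the sub-region $\tau\le |t|/2$ (say) together with $\int_{\mathbb{R}}|\varphi(x,\tau)|\,d\tau<\infty$, and handle $\tau>|t|/2$ using the decay of $\varphi$ in $\tau$. This case analysis, carried out carefully and uniformly in $x$ (with the extra $x$-decay coming along for free from the Schwartz property), is the only genuinely delicate point; everything else is the routine near-diagonal Taylor expansion that makes singular integrals of Schwartz functions converge.
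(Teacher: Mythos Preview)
Your plan is the paper's: treat $D_{\rm right}^\alpha\varphi$ and $(-\Delta)^s\varphi$ separately, Taylor-expand near the diagonal, and in the far region split according to whether the kernel or the Schwartz function supplies the decay; the paper works in the shifted variable and, for $t<-1$, splits the $\tau$-integral at $-t/2$, while it only sketches the $(-\Delta)^s$ part that you spell out.

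There is, however, one slip in your far-field temporal bookkeeping. For $t$ large negative the bound $(\tau-t)^{-1-\alpha}\le C(1+|t|)^{-1-\alpha}$ requires $\tau-t\ge c|t|$, i.e.\ $\tau\ge -(1-c)|t|$; it does \emph{not} hold on the set $\{\tau\le|t|/2\}$ you name, since there $\tau-t$ can be as small as $1$. The roles of your two sub-regions should be swapped: on $\tau\ge -|t|/2$ the weight is at most $C|t|^{-1-\alpha}$ and you use $\int_{\mathbb R}|\varphi(x,\tau)|\,d\tau<\infty$; on $t+1<\tau<-|t|/2$ one has $|\tau|\ge|t|/2$ and invokes the Schwartz decay of $\varphi(x,\tau)$. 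With this correction your argument goes through and coincides with the paper's.
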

  This sharp estimate  is a key ingredient and an important tool in the analysis of dual fractional parabolic equations. It follows more or less from observations in \cite{BMT} and \cite{Si}(who do not provide proofs). Detailed proofs were given in \cite{S} for the fractional Laplacian, and in \cite{SV} for the Marchaud fractional derivatives.

 To illustrate the optimality of the decay exponent in \eqref{optimal}, let us consider the following counterexample.

\begin{counterexample}
Let $\psi(x,t)=\eta(x)\phi(t)$ with $\eta\in  C_0^\infty(\R^n,[-1,0])$ and $\phi\in  C_0^\infty(\R,[0,1])$ are two cut-off functions satisfying
\begin{subequations}
\begin{align*}
\eta(x)=
\begin{aligned}
\left\lbrace
\begin{aligned}
-1, & \ \ x\in \overline{B_{1}(0)} \,, \\
0, & \ \ x\not\in B_{2}(0)\,,
\end{aligned}
\right.
\end{aligned}
\quad\quad
\phi(t)=
\begin{aligned}
\left\lbrace
\begin{aligned}
1, & \ \ t\in[-1,1]\,, \\
0, & \ \ t\not\in (-2,2)\,.
\end{aligned}
\right.
\end{aligned}
\end{align*}
\end{subequations}
 Clearly, $\psi\in\mathcal{S}(\R^n\tm\R)$. However,  for $(x,t)\in B_2^c(0)\tm[-1,1]$,
  \begin{eqnarray}\label{eg}\nonumber
\Abs{\lt[D_{\rm{right}}^\alpha+(-\Delta)^s\rt]\psi(x,t)}&=&\Abs{C_{\alpha}\eta(x)\jf_t^{+\infty}\fr{\phi(t)-\phi(\tau)}{(\tau-t)^{1+\alpha}}
\operatorname{d}\!\tau
+C_{n,s}\phi(t)\jf_{\R^n}\fr{\eta(x)-\eta(y)}{\abs{x-y}^{n+2s}}\operatorname{d}\!y}
\\ \nonumber
&=&C_{n,s}\jf_{\R^n}\fr{-\eta(y)}{\abs{x-y}^{n+2s}}\operatorname{d}\!y
\geq C_{n,s}\jf_{B_1(0)}\fr{1}{\abs{x-y}^{n+2s}}\operatorname{d}\!y\\
&\geq&\fr{C_0}{1+\abs{x}^{n+2s}},
\end{eqnarray}
where $C_0$ is a positive constant that depends only on $n$ and $s$.
This shows that  the decay exponent with respect to $x$ in estimate \eqref{optimal} cannot be improved. Indeed, if not, suppose that there exist some positive constants $C_1$ and $\delta_1$ such that
\[\Abs{\lt[D_{\rm{right}}^\alpha+(-\Delta)^s\rt]\psi(x,t)}\leq\fr{C_1}{\lt(1+\abs{x}^{n+2s+\delta_1}\rt)\lt(1+|t|^{1+\alpha}\rt)}.\]
Together with \eqref{eg},   we have
\[\fr{C_1}{C_0}\geq\fr{\lt(1+\abs{x}^{n+2s+\delta_1}\rt)\lt(1+|t|^{1+\alpha}\rt)}{\lt(1+\abs{x}^{n+2s}\rt)}\geq C(1+\abs{t}^{1+\alpha})\lt(1+\abs{x}^{\delta_1}\rt),~(x,t)\in  B_2^c(0)\tm[-1,1].\]
which is a contradiction as $\abs{x}\to+\infty$. In analogy with above estimates, 
we can also prove that  the decay rate with respect to $t$ in estimate \eqref{optimal} can not be improved. Consequently, the decay exponent in \eqref{optimal} is optimal.
\end{counterexample}

As an application of the above Liouville theorem  for homogeneous  Marchaud fractional equation (Corollary \ref{Liouville-M}), we establish an  integral representation of  solutions to the corresponding nonhomegeneous ones.

\begin{theorem}\label{equivalence}
Let $u\in  C^{\alpha+\epsilon}_{ {\rm loc}}(\mathbb{R}) \cap \mathcal{L}_{\alpha}(\mathbb{R})$ be nonnegative and $f$ be nonnegative and locally bounded.
Then the pseudo-differential equation
\begin{equation*}
  \partial_t^{\alpha} u(t) =f(t),\,\, t\in  \mathbb{R},
\end{equation*}
is equivalent to the integral equation
\begin{equation*}
  u(t)=c_0+ c_{\alpha}\int_{-\infty}^{t}
  \frac{f(\tau)}{{(t-\tau)^{1-\alpha}}}\operatorname{d}\!\tau,
\end{equation*} for any nonnegative constant $c_0$.

Furthermore, if $f$ is a function depends also on $u$, then under the following  assumption:
 \begin{equation*}f(t,u)\geq C_2>0 \;\mbox{ for } \;u\geq C_1>0, \;\mbox{ uniformly in } t\in  \mathbb{R},\end{equation*}
 we must have $c_0=0$, i.e.,
 \begin{equation*}
  u(t)=c_{\alpha}\int_{-\infty}^{t}
  \frac{f(\tau,u(\tau))}{{(t-\tau)^{1-\alpha}}}\operatorname{d}\!\tau.
\end{equation*}\end{theorem}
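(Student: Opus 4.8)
\textbf{Proof proposal for Theorem \ref{equivalence}.}

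The plan is to prove the equivalence in two directions, with the Liouville theorem (Corollary \ref{Liouville-M}) supplying the uniqueness needed for the nontrivial implication. First I would show that the integral equation implies the differential equation: given $u(t)=c_0+c_\alpha\int_{-\infty}^t (t-\tau)^{\alpha-1} f(\tau)\,\mathrm d\tau$, I would verify by a direct (but routine) computation that applying the Marchaud derivative $\partial_t^\alpha$ recovers $f$. The key identity here is that $\partial_t^\alpha$ is a left inverse of the Riemann--Liouville-type fractional integral $I_-^\alpha g(t):=c_\alpha\int_{-\infty}^t (t-\tau)^{\alpha-1} g(\tau)\,\mathrm d\tau$ on functions in the relevant class; the constant $c_0$ is annihilated since $\partial_t^\alpha$ kills constants. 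The local regularity hypothesis $f$ locally bounded together with $u\in C^{\alpha+\epsilon}_{\mathrm{loc}}$ is what makes the singular integral defining $\partial_t^\alpha u$ absolutely convergent, so this direction is essentially Fubini plus the Beta-function identity $\int_0^\infty r^{\alpha-1}((1+r)^{-\alpha}-\cdots)$ type computation; I would cite the semigroup property of Marchaud/Riemann--Liouville operators rather than grind it out.

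For the converse — $\partial_t^\alpha u = f$ implies the integral representation — I would set $w(t):=c_\alpha\int_{-\infty}^t (t-\tau)^{\alpha-1} f(\tau)\,\mathrm d\tau$, which is well-defined and nonnegative since $f\ge 0$, and check that $w\in\mathcal L_\alpha(\mathbb R)$ (using $u\in\mathcal L_\alpha$ and $0\le w$, together with a comparison argument to control the tail of $w$). By the first direction, $\partial_t^\alpha w = f$ in the appropriate sense. Hence $v:=u-w$ satisfies $\partial_t^\alpha v = 0$ in $\mathbb R$. The point is to check $v\in\mathcal L_\alpha(\mathbb R)$ and that $v$ is a distributional solution in the sense of Corollary \ref{Liouville-M}; then Corollary \ref{Liouville-M} forces $v\equiv c_0$ for some constant $c_0$, and $c_0\ge 0$ because $u\ge 0$ while, as $t\to-\infty$, $w(t)\to 0$ (or at least $\liminf w\ge 0$) so $c_0=\liminf_{t\to-\infty}(u-w)\ge 0$. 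This yields $u=c_0+w$ with $c_0\ge 0$, as claimed; conversely any such $c_0\ge 0$ gives a nonnegative solution, establishing the full equivalence including the arbitrariness of $c_0$.

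For the last assertion, suppose $f=f(t,u)$ satisfies $f(t,u)\ge C_2>0$ whenever $u\ge C_1>0$. Since $u=c_0+w$ with $c_0\ge 0$ and $w\ge 0$, I would argue that if $c_0>0$ then eventually (indeed everywhere, once we know $u\ge c_0>0$) we may have $u\ge C_1$ on a large set, forcing $f(\tau,u(\tau))\ge C_2$ there, and then the integral $w(t)=c_\alpha\int_{-\infty}^t(t-\tau)^{\alpha-1}f(\tau,u(\tau))\,\mathrm d\tau \ge c_\alpha C_2 \int_{-\infty}^t (t-\tau)^{\alpha-1}\,\mathrm d\tau = +\infty$ diverges (the integral $\int_{-\infty}^t (t-\tau)^{\alpha-1}\,\mathrm d\tau$ is infinite at the lower endpoint since $\alpha-1>-1$), contradicting $w(t)<\infty$. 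The subtlety is bootstrapping from "$c_0>0$" to "$u\ge C_1$ on a set of infinite measure extending to $-\infty$": if $c_0\ge C_1$ this is immediate; if $0<c_0<C_1$ one first notes $u\equiv c_0$ is impossible (it would force $f\equiv 0$), so $w\not\equiv 0$, and monotonicity/growth of $w$ pushes $u$ above $C_1$ for $t$ large — then the same divergence argument applies on $(-\infty, t)$ for such large $t$. Either way $w(t)=\infty$, a contradiction, so $c_0=0$.

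The main obstacle I anticipate is the converse direction's bookkeeping: verifying that $w\in\mathcal L_\alpha(\mathbb R)$ and that $v=u-w$ is genuinely a \emph{distributional} solution in the exact sense required by Corollary \ref{Liouville-M} (testing against $\overline{D_{\mathrm{right}}^\alpha\varphi}$ for all $\varphi\in\mathcal S(\mathbb R)$), rather than merely a pointwise or classical solution. This requires Lemma \ref{lm1} (in its one-dimensional, time-only form) to justify integration by parts / Fubini when transferring the derivative onto the test function, and care that the tail behavior of $w$ near $-\infty$ is compatible with the integrability weight $(1+|t|^{1+\alpha})^{-1}$. Once that membership and the weak-formulation compatibility are nailed down, the Liouville theorem does the real work and the rest is the elementary divergence estimate above.
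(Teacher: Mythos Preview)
Your overall architecture --- define the fractional integral of $f$, subtract, and apply Corollary~\ref{Liouville-M} --- matches the paper's, but you are missing the one substantive idea in the converse direction. You assert that $w(t)=c_\alpha\int_{-\infty}^t (t-\tau)^{\alpha-1}f(\tau)\,\mathrm d\tau$ is ``well-defined'' and then plan to ``check $w\in\mathcal L_\alpha$'' via an unspecified comparison. But a priori nothing prevents this integral from being $+\infty$: the hypotheses on $f$ are only $f\ge 0$ and locally bounded, with no decay at $-\infty$. The paper handles this by a \emph{truncation plus maximum principle} argument: for each fixed $t$ and $R>-t$ it sets $f_R=f\cdot\chi_{(-R,t)}$, defines $v_R$ as the fractional integral of $f_R$ (now manifestly finite), and observes that $w_R:=u-v_R$ solves $\partial_t^\alpha w_R=0$ on $(-R,t]$ with $w_R=u\ge 0$ on $(-\infty,-R]$. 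A direct pointwise maximum principle for $\partial_t^\alpha$ --- if $w_R$ had a negative interior minimum at $t_0$, then $\partial_t^\alpha w_R(t_0)<0$ straight from the definition --- gives $w_R\ge 0$, hence $u(t)\ge v_R(t)$; letting $R\to\infty$ with monotone convergence yields simultaneously the finiteness of the full integral $v(t)$ and the pointwise bound $0\le v\le u$.

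Once you have $0\le v\le u$, every downstream worry you flagged evaporates: $v\in\mathcal L_\alpha$ is immediate from $u\in\mathcal L_\alpha$, the difference $u-v\ge 0$ gives $c_0\ge 0$ without needing any limit as $t\to-\infty$, and the distributional verification is routine since both $u$ and $v$ are locally H\"older and in $\mathcal L_\alpha$. Your proposed route to $c_0\ge 0$ via $w(t)\to 0$ as $t\to-\infty$ is not available in general (no decay of $f$ is assumed), so the maximum-principle comparison is not optional bookkeeping but the crux. For the final claim that $c_0=0$, your divergence argument is the same as the paper's; your extra care about the case $0<c_0<C_1$ is reasonable, though the bootstrap you sketch is still incomplete.
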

The remaining of this paper is organized as follows.
In Section \ref{sec2},  our primary focus is on  defining  an action of the dual
fractional parabolic operator $D_{\rm{left}}^\alpha+(-\Delta)^s$  on
  distributions, as introduced in Definition \ref{dlfd}, by constructing an appropriate tempered distributional space.
 To achieve this, we establish  some fundamental properties related to the  operator
  $D_{\rm{left}}^\alpha+(-\Delta)^s$, including  the Fourier Transform identities (Lemma \ref{Fourier-derivative}) and  the   ``integration  by parts formula'' (Lemma \ref{lem2.2}).
  In Section \ref{sec3}, we prove our main results as stated in Theorem \ref{Liouville}
  and derive a key estimate on the decay rate of $\lt[D_{\rm{right}}^\alpha+(-\Delta)^s\rt]\varphi(x,t)$ (Lemma \ref{lm1}).
  In Section \ref{sec4}, as an application of the Liouville theorem, we establish an equivalence between the pseudo-differential equation involving Marchaud fractional derivatives and the corresponding integral equation.

 Throughout this paper, we use $C$, $c$, $C_i$, $c_i$ to denote  general constants whose values may vary from line to line.
\medskip

\section{Preliminaries}\label{sec2}

In this section, we first derive the Fourier transform identities and an  ``integration  by parts formula" for sufficiently smooth functions in the setting
 of the dual fractional operator $\partial_t^\alpha+(-\Delta)^s$. Then, by  combining an optimal
  estimates established in Lemma \ref{lm1}, we give the definition of the fractional derivative
  $\partial_t^\alpha+(-\Delta)^s$ of a tempered distribution by duality. One-sided distributional spaces for the left fractional Marchaud derivatives $\partial_t^\alpha$  and the definition of fractional powers of the right derivative was introduced first by  Stinga and Vaughan et. in \cite{SV,BMT}.

\subsection{Fourier Transform}\label{2.1}
We begin this section by recalling the following classical formula related with the Gamma function $\Gamma$.
\begin{equation}
\lm^\alpha=\fr{1}{\Gamma(-\alpha)}\int_0^{+\infty}(e^{-t\lm}-1)\fr{\operatorname{d}\!t}{t^{1+\alpha}},~0<\alpha<1, \lm\neq0.
\end{equation}
This absolutely convergent integral can be interpreted also as integral along the complex path
$\{z=t\lm:0<t<\infty\}$. By applying the Cauchy Integral Theorem, we are able to extend this form to complex parameters $i\lm$, that is,
\begin{equation}\label{Gamma}
(i\lm)^\alpha=\fr{1}{\Gamma(-\alpha)}\int_0^{+\infty}(e^{-i t\lm}-1)\fr{\operatorname{d}\!t}{t^{1+\alpha}},~0<\alpha<1,\lm\neq0.
\end{equation}
These formulas will be the keys 
  to derive
the  Fourier Transform of the  Marchaud fractional  derivative $D_{\rm{left}}^\alpha$ and $D_{\rm{right}}^\alpha$ acting on smooth functions as indicated by the following lemma, which has been separately proved  in \cite{BMT} for the Marchaud fractional derivatives $D_{\rm{left}}^{\alpha}$ and in \cite{Si} for the fractional Laplacian $(-\Delta)^s$.

\begin{lemma}\label{Fourier-derivative}
For any Schwartz function $\varphi\in \mathcal{S}(\R^n\tm\R)$, it holds that
\begin{equation*}
  \mathcal{F}\lt[\lt(D_{\rm{left}}^\alpha+(-\Delta)^s\rt)\varphi\rt](\xi,\rho)=\lt[(i\rho)^\alpha+\abs{\xi}^{2s}\rt]\mathcal{F}{\varphi}(\xi,\rho),
\end{equation*}
and
\begin{equation*}
  \mathcal{F}\lt[\lt(D_{\rm{right}}^\alpha+(-\Delta)^s\rt)\varphi\rt](\xi,\rho)=\lt[(-i\rho)^\alpha+\abs{\xi}^{2s}\rt]\mathcal{F}{\varphi}(\xi,\rho).
\end{equation*}
Where $\mathcal{F}\varphi(\xi,\rho)$  denotes the Fourier transform of the function $\varphi(x,t)$, that is,
\begin{equation}\label{f-transform}\mathcal{F}\varphi(\xi,\rho):=\fr{1}{(2\pi)^{\frac{n+1}{2}}}\jf_{-\infty}^{+\infty}\jf_{\R^n}e^{-i (\xi \cdot x+\rho t)}\varphi(x,t)\operatorname{d}\!x\operatorname{d}\!t.\end{equation}\end{lemma}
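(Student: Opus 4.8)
The plan is to prove Lemma \ref{Fourier-derivative} by establishing the Fourier transform identity separately for each summand, since $D_{\rm{right}}^\alpha$ acts only in the time variable $t$ and $(-\Delta)^s$ acts only in the space variables $x$, and the Fourier transform \eqref{f-transform} factors as a composition of a transform in $t$ and a transform in $x$. First I would handle the time piece. Starting from the Marchaud formula $D_{\rm{right}}^\alpha\varphi(x,t)=C_\alpha\int_t^{+\infty}\frac{\varphi(x,t)-\varphi(x,\tau)}{(\tau-t)^{1+\alpha}}\operatorname{d}\!\tau$, I substitute $\tau=t+\sigma$ with $\sigma>0$ to rewrite it as $C_\alpha\int_0^{+\infty}\frac{\varphi(x,t)-\varphi(x,t+\sigma)}{\sigma^{1+\alpha}}\operatorname{d}\!\sigma$. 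Since $\varphi\in\mathcal{S}$, the integrand is controlled near $\sigma=0$ by $\|\partial_t^2\varphi\|_\infty\sigma^{1-\alpha}$ and near $\sigma=\infty$ by $2\|\varphi\|_\infty\sigma^{-1-\alpha}$, so the integral converges absolutely and uniformly; this uniform integrability lets me apply Fubini to interchange the $\sigma$-integral with the time Fourier transform. Taking the Fourier transform in $t$ of the translate $\varphi(x,t+\sigma)$ produces the factor $e^{i\rho\sigma}$ times $\mathcal{F}_t\varphi$, so I get
\begin{equation*}
\mathcal{F}_t\bigl[D_{\rm{right}}^\alpha\varphi\bigr](x,\rho)=C_\alpha\left(\int_0^{+\infty}\frac{1-e^{i\rho\sigma}}{\sigma^{1+\alpha}}\operatorname{d}\!\sigma\right)\mathcal{F}_t\varphi(x,\rho).
\end{equation*}
Recalling $C_\alpha=\frac{1}{|\Gamma(-\alpha)|}$ and that $\Gamma(-\alpha)<0$ for $0<\alpha<1$, the bracketed integral equals $-\frac{1}{\Gamma(-\alpha)}\int_0^{+\infty}(e^{i\rho\sigma}-1)\sigma^{-1-\alpha}\operatorname{d}\!\sigma$, which by formula \eqref{Gamma} applied with $\lambda=-\rho$ (i.e. $i\lambda=-i\rho$) is exactly $(-i\rho)^\alpha$. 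Thus $\mathcal{F}_t[D_{\rm{right}}^\alpha\varphi](x,\rho)=(-i\rho)^\alpha\mathcal{F}_t\varphi(x,\rho)$; the analogous computation with $D_{\rm{left}}^\alpha$, using the substitution $\tau=t-\sigma$, produces the factor $(i\rho)^\alpha$.

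Next I would record the spatial identity $\mathcal{F}_x\bigl[(-\Delta)^s\varphi\bigr](\xi,t)=|\xi|^{2s}\mathcal{F}_x\varphi(\xi,t)$, which is the classical fact cited from \cite{Si}: one writes $(-\Delta)^s\varphi=C_{n,s}\,\mathrm{P.V.}\int_{\R^n}\frac{\varphi(x,t)-\varphi(x-z,t)}{|z|^{n+2s}}\operatorname{d}\!z$, symmetrizes in $z\mapsto-z$ to kill the principal-value issue, applies Fubini (justified again by the Schwartz decay of second differences of $\varphi$), and evaluates $C_{n,s}\int_{\R^n}\frac{1-\cos(\xi\cdot z)}{|z|^{n+2s}}\operatorname{d}\!z=|\xi|^{2s}$ via a rotation and the standard one-dimensional reduction. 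Combining the two pieces: since $\mathcal{F}=\mathcal{F}_x\circ\mathcal{F}_t=\mathcal{F}_t\circ\mathcal{F}_x$ (the variables are disjoint), and $D_{\rm{right}}^\alpha$ commutes with $\mathcal{F}_x$ while $(-\Delta)^s$ commutes with $\mathcal{F}_t$, I obtain $\mathcal{F}[(D_{\rm{right}}^\alpha+(-\Delta)^s)\varphi](\xi,\rho)=[(-i\rho)^\alpha+|\xi|^{2s}]\mathcal{F}\varphi(\xi,\rho)$, and likewise with $(i\rho)^\alpha$ for the left derivative.

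The main obstacle, and the only point requiring genuine care, is the extension of the real Gamma-function identity to the imaginary axis, i.e. the passage from the stated absolutely convergent formula for $\lambda^\alpha$ to formula \eqref{Gamma} for $(i\lambda)^\alpha$: one must justify rotating the contour $\{z=t\lambda\}$ to $\{z=it\lambda\}$ in the complex plane. This is done by Cauchy's theorem applied to the sector between the two rays, checking that the arc at infinity contributes nothing (the integrand $e^{-z}-1$ over $z^{1+\alpha}$ decays since $\mathrm{Re}\,z\geq 0$ throughout the closed first quadrant) and that the integrand is integrable near the origin (where $|e^{-z}-1|\lesssim|z|$ beats $|z|^{-1-\alpha}$ for $\alpha<1$), together with fixing the branch of $z^\alpha$ consistently on the quadrant. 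Once \eqref{Gamma} is in hand the rest is bookkeeping. A secondary but routine point is verifying that $(D_{\rm{left}}^\alpha+(-\Delta)^s)\varphi\in C^\infty(\R^n\times\R)$ — this follows because differentiating under the integral sign in both the Marchaud and the fractional-Laplacian representations is legitimate for Schwartz $\varphi$, the dominating bounds being uniform on compact sets — and this smoothness claim of the lemma can also be quoted from the references \cite{S, SV} as the excerpt already does for the decay estimate.
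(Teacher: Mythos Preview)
Your proposal is correct and follows exactly the approach the paper itself sketches: the paper does not give a self-contained proof of this lemma but supplies the key complex Gamma identity \eqref{Gamma} (obtained by contour rotation via Cauchy's theorem) and cites \cite{BMT} and \cite{Si} for the Marchaud and fractional-Laplacian pieces respectively, which is precisely your decomposition. Two minor remarks: there is a stray sign in your intermediate step (the product $C_\alpha\cdot(\text{bracket})$ equals $\frac{1}{\Gamma(-\alpha)}\int_0^\infty(e^{i\rho\sigma}-1)\sigma^{-1-\alpha}\operatorname{d}\!\sigma$, not its negative, though your final answer $(-i\rho)^\alpha$ is correct), and the $C^\infty$ assertion you mention at the end belongs to Lemma~\ref{lm1}, not to the present lemma.
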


\subsection{Distributional setting}\label{2.2}
In this subsection,  we provide  the  integration  by parts formula for sufficiently smooth functions in the setting of the dual fractional operator $D_{\rm{left}}^\alpha+(-\Delta)^s$ based on the results in \cite{BMT} and \cite{Si}. With this special form as our starting point, we will subsequently extend  it to a class of  distributions  we are going to work with.
\begin{lemma}\label{lem2.2}
If $u,\varphi\in \mathcal{S}(\R^n\tm\R)$, then
\begin{equation*}\label{DIF}
  \int_{-\infty}^{+\infty}\int_{\R^n}\lt[D_{\rm{left}}^\alpha+(-\Delta)^s\rt]u(x,t)\overline{\varphi(x,t)}\operatorname{d}\!x\operatorname{d}\!t=
  \int_{-\infty}^{+\infty}\int_{\R^n}u(x,t)\overline{\lt[D_{\rm{right}}^\alpha+(-\Delta)^s\rt]\varphi(x,t)}\operatorname{d}\!x\operatorname{d}\!t.
\end{equation*}
\end{lemma}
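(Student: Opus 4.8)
\textbf{Proof proposal for Lemma \ref{lem2.2}.}

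The plan is to establish the identity by a Fubini-type argument that symmetrizes the double difference quotients defining the two one-sided Marchaud derivatives, together with the already-standard self-adjointness of $(-\Delta)^s$. Since $u,\varphi\in\mathcal{S}(\R^n\times\R)$, all the integrals below converge absolutely, which is what makes the interchange of order of integration legitimate; I would state this integrability once at the outset and then not belabor it. First I would split the operator and treat the two summands separately, so that it suffices to prove the two identities
\[
\int_{-\infty}^{+\infty}\!\!\int_{\R^n} D_{\rm{left}}^\alpha u\,\overline{\varphi}\,\operatorname{d}\!x\operatorname{d}\!t
=\int_{-\infty}^{+\infty}\!\!\int_{\R^n} u\,\overline{D_{\rm{right}}^\alpha\varphi}\,\operatorname{d}\!x\operatorname{d}\!t,
\qquad
\int_{-\infty}^{+\infty}\!\!\int_{\R^n} (-\Delta)^s u\,\overline{\varphi}\,\operatorname{d}\!x\operatorname{d}\!t
=\int_{-\infty}^{+\infty}\!\!\int_{\R^n} u\,\overline{(-\Delta)^s\varphi}\,\operatorname{d}\!x\operatorname{d}\!t.
\]
The second of these is the classical fact that the fractional Laplacian is symmetric on Schwartz functions (proved e.g. by writing it as a singular integral and using the change of variables $x\leftrightarrow y$, or directly on the Fourier side via $\mathcal F[(-\Delta)^s u]=|\xi|^{2s}\mathcal F u$ and Plancherel); I would cite it rather than reprove it.

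For the time-derivative part, I would write out
\[
\int_{\R}\Big(C_\alpha\int_{-\infty}^{t}\frac{u(x,t)-u(x,\tau)}{(t-\tau)^{1+\alpha}}\operatorname{d}\!\tau\Big)\overline{\varphi(x,t)}\operatorname{d}\!t
\]
for fixed $x$, insert absolute values to justify Fubini using $u,\varphi\in\mathcal S$ (the inner singular integral is $O((t-\tau)^{-\alpha})$ near $\tau=t$ because $u$ is smooth, and the tails decay rapidly), and then swap the $t$ and $\tau$ integrations over the region $\{\tau<t\}$. After relabeling the roles of the two time variables in the term involving $u(x,\tau)\overline{\varphi(x,t)}$, the double integral reorganizes into
\[
C_\alpha\int_{\R}u(x,\sigma)\Big(\int_{\sigma}^{+\infty}\frac{\overline{\varphi(x,\sigma)}-\overline{\varphi(x,t)}}{(t-\sigma)^{1+\alpha}}\operatorname{d}\!t\Big)\operatorname{d}\!\sigma
=\int_{\R}u(x,\sigma)\overline{D_{\rm{right}}^\alpha\varphi(x,\sigma)}\operatorname{d}\!\sigma,
\]
since $C_\alpha$ is real. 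Finally I would integrate this identity in $x$ over $\R^n$ and combine with the fractional-Laplacian identity to obtain the claim. An alternative, perhaps cleaner, route is to pass entirely to the Fourier side: by Lemma \ref{Fourier-derivative} the left-hand side equals $\langle [(i\rho)^\alpha+|\xi|^{2s}]\mathcal F u,\mathcal F\varphi\rangle$ and the right-hand side equals $\langle \mathcal F u,[(-i\rho)^\alpha+|\xi|^{2s}]\mathcal F\varphi\rangle$, and these agree by Plancherel because $\overline{(i\rho)^\alpha}=(-i\rho)^\alpha$ and $|\xi|^{2s}$ is real; one just has to check that $[(i\rho)^\alpha+|\xi|^{2s}]\mathcal F u$ is integrable against $\mathcal F\varphi$, which follows from the Schwartz decay of $\mathcal F u,\mathcal F\varphi$ and polynomial growth of the symbol.

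The main obstacle is purely a matter of bookkeeping rather than depth: one must verify carefully that the iterated integral
\[
\int_{\R}\int_{-\infty}^{t}\frac{|u(x,t)-u(x,\tau)|}{(t-\tau)^{1+\alpha}}\,|\varphi(x,t)|\operatorname{d}\!\tau\operatorname{d}\!t
\]
is finite (uniformly enough in $x$ to integrate in $x$ afterwards), so that Tonelli/Fubini applies and the swap of $t$ and $\tau$ is valid; the near-diagonal bound uses $|u(x,t)-u(x,\tau)|\le \|\partial_t u\|_{L^\infty}|t-\tau|$, which is integrable against $(t-\tau)^{-1-\alpha}$ for $0<\alpha<1$ near the diagonal, while the far region is controlled by the rapid decay of $\varphi$ (and of $u$). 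Everything else is a relabeling of variables and the real-valuedness of $C_\alpha$ and of $|\xi|^{2s}$.
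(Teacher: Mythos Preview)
The paper does not actually prove Lemma \ref{lem2.2}; it simply attributes the result to \cite{BMT} and \cite{Si} and moves on. Your proposal therefore supplies more than the paper does, and both routes you sketch are in principle sound.

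There is, however, a small wrinkle in the Fubini argument as you describe it. You write that after swapping you would ``relabel the roles of the two time variables in the term involving $u(x,\tau)\overline{\varphi(x,t)}$,'' which implicitly splits the double integral over $\{\tau<t\}$ into two pieces, one carrying $u(x,t)\overline{\varphi(x,t)}$ and one carrying $u(x,\tau)\overline{\varphi(x,t)}$. Each of those pieces is \emph{not} absolutely integrable near the diagonal $\tau=t$ (the kernel is $(t-\tau)^{-1-\alpha}$ there); the absolute integrability you carefully verified applies only to the full difference quotient. The fix is routine: either truncate to $\{t-\tau>\epsilon\}$ before splitting and let $\epsilon\to0$ at the end, or avoid splitting altogether by observing that the difference between the two sides equals $\int_{\R} D_{\rm left}^\alpha\!\big(u(x,\cdot)\overline{\varphi(x,\cdot)}\big)(t)\,\operatorname{d}\!t$, which vanishes since the Fourier transform of $D_{\rm left}^\alpha g$ is $(i\rho)^\alpha\widehat g(\rho)$ and hence is zero at $\rho=0$.

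Your alternative Fourier-side proof via Plancherel, Lemma \ref{Fourier-derivative}, and $\overline{(-i\rho)^\alpha}=(i\rho)^\alpha$ is clean and correct as written, and given that the paper has already recorded Lemma \ref{Fourier-derivative} it is arguably the most natural proof in this context.
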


In the continuation, we will utilize  Lemma \ref{lem2.2} to define $D_{\rm{left}}^\alpha+(-\Delta)^s$ in the sense of distributions. Notice that if $u\in \mathcal{S}',$ the dual space of $\mathcal{S}(\R^n\tm\R)$, then a natural definition would be
\begin{equation}\label{dual}
\lt\langle \lt[D_{\rm{left}}^\alpha+(-\Delta)^s\rt]u,\varphi\rt\rangle=\langle u,\lt[D_{\rm{right}}^\alpha+(-\Delta)^s\rt]\varphi\rangle,~\forall \varphi\in \mathcal{S}(\R^n\tm\R).\end{equation}
Nevertheless, it is straightforward from the Fourier transform identities in Lemma \ref{Fourier-derivative} to see that, in general, $\lt[D_{\rm{right}}^\alpha+(-\Delta)^s\rt]\varphi\notin \mathcal{S}(\R^n\tm\R)$ due to the fact 
that the derivatives of multiplier $(-i\rho)^\alpha+\abs{\xi}^{2s}$ exhibit a singularity at the origin.
 In other words, the operator $D_{\rm{right}}^\alpha+(-\Delta)^s$ do not preserve the Schwartz class
 $\mathcal{S}(\R^n\tm\R)$. Hence we need to explore an alternative  distributional space which is
  a proper subset of $\mathcal{S}'$ to ensure the well-definedness of the right side of \eqref{dual}.
To this end, we define the class
 \[\mathcal{S}_{2s,\alpha}(\R^n\tm\R):=\left\{\varphi\in C^{\infty}(\R^n\tm\R)\mid\rho_{\gamma,\beta}(\varphi)<+\infty,\forall\mb{~multi-index~} \gamma \mb{~and~} \beta\in \mathbb{N}\right\},\]
 endowed with the  family of seminorms\begin{equation}\label{seminorm}\rho_{\gamma,\beta}(\varphi):=\sup\limits_{(x,t)\in \R^n\tm\R}(1+\abs{x}^{n+2s})(1+\abs{t}^{1+\alpha})|\partial_x^\gamma \partial_t^\beta \varphi(x,t)|,\end{equation}
 for each \mb{~multi-index~} $\gamma$ \mb{~and~} $\beta\in \mathbb{N}$.

Clearly, this  test functions space  includes the Schwartz space $\mathcal{S}(\R^n\tm\R)$
 and   one can easily derive that $\lt[D_{\rm{right}}^\alpha+(-\Delta)^s\rt]\varphi\in \mathcal{S}_{2s,\alpha}(\R^n\tm\R)$ for
 $\varphi\in \mathcal{S}(\R^{n}\times \R)$ from Lemma \ref{lm1}. 
Let ${\mathcal{S}}_{2s,\alpha}'$  be the dual space of ${\mathcal{S}}_{2s,\alpha}(\R^n\tm\R)$.  Then ${\mathcal{S}}_{2s,\alpha}'$ is a subspace of $\mathcal{S}'$ 
and thus   the symmetry shown as in Lemma \ref{lem2.2} allows us to extend the definition of the operator $D_{\rm{left}}^\alpha+(-\Delta)^s$ to the
  space $\mathcal{S}_{2s,\alpha}'$  by duality.
  Namely,
\begin{definition}\label{dlfd}
For  $u\in \mathcal{S}_{2s,\alpha}'$, we define $\lt[D_{\rm{left}}^\alpha+(-\Delta)^s\rt]u$ as a tempered distribution, given by
\begin{equation*}
  \lt\langle \lt[D_{\rm{left}}^\alpha+(-\Delta)^s\rt]u,\varphi\rt\rangle:=\langle u,\lt[D_{\rm{right}}^\alpha+(-\Delta)^s\rt]\varphi\rangle,~\forall \varphi\in \mathcal{S}(\R^n\tm\R).
\end{equation*}
\end{definition}

 Moreover, $D_{\rm{left}}^\alpha+(-\Delta)^s$ is a continuous operator from $\mathcal{S}_{2s,\alpha}'$ to $\mathcal{S}'$. In particular,
  this operator will be defined in the following function space
  \begin{eqnarray*}
    \mathcal{L}_{2s,\alpha}(\mathbb{R}^n\times\mathbb{R}) &:=& L^1_{\rm loc} (\mathbb{R}^n\times\mathbb{R})\cap\mathcal{S}_{2s,\alpha}' \\
    &=& \left\{u:\mathbb{R}^n\times\mathbb{R}\to\R \mid \int_{-\infty}^{+\infty} \int_{\mathbb{R}^n} \frac{|u(x,t)|}{(1+|x|^{n+2s})(1+|t|^{1+\alpha})}\operatorname{d}\!x\operatorname{d}\!t<+\infty\right\}.
  \end{eqnarray*}
  This is an appropriate  distributional space in which one can  define  the distributional solutions  of equation \eqref{model-Lio}, as stated in  \eqref{def-distribution soln}.

By the way,   in Definition \ref{dlfd}, \eqref{dual} and subsequent content, the notion $\langle\cdot,\cdot\rangle$  represents a duality action. In general, for a locally integrable function $u$, such as $u\in\mathcal{L}_{2s,\alpha}(\mathbb{R}^n\times\mathbb{R})$,
we  regard it as a distribution in the following sense
\[\langle u,\varphi\rangle:=\int_{-\infty}^{+\infty} \int_{\mathbb{R}^n}u(x,t)\overline{\varphi(x,t)}\operatorname{d}\!x\operatorname{d}\!t.\]
for any $\varphi\in \mathcal{S}_{2s,\alpha}(\R^n\tm\R).$



\section{The proof of main results}\label{sec3}

In this section,
 we complete the proof of Liouville theorem for the homogeneous dual fractional parabolic equation in the
  whole space (Theorem \ref{Liouville}) and  derive a key estimate on the decay rate
  of $\lt[D_{\rm{right}}^\alpha+(-\Delta)^s\rt]\varphi$
 for functions in Schwartz space through
very delicate calculations (Lemma \ref{lm1}).

\subsection{The proof of Liouville theorem}\label{3.1}
We begin this subsection by recalling the definition of the Fourier transform of a tempered distribution $u$,
 particular in $\cL_{2s,\alpha}(\R^n\tm\R)\subset\mathcal{S}'$ (cf. \cite{L}),
\begin{equation}\label{tem}
  \langle\mathcal{F}u,\phi\rangle:=\langle u,\mathcal{F}^{-1}\phi\rangle,~\forall~\phi\in\mathcal{S}(\R^n\tm\R).
  \end{equation}
 Subsequently, we conclude that the solution  $u \in \cL_{2s,\alpha}(\R^n\tm\R)$ of equation
  \eqref{model-Lio} in the sense of distributions must be constant
by utilizing  a fundamental result from Harmonic Analysis (cf.\cite{grafok}), which states that
  \begin{equation}
  \label{fact}\mb{Let }~u\in\mathcal{S}'. \mb{~If~}~\mb{the~ support~of~} \mathcal{F}u ~\mb{is~a~ single~ point~(the origin), ~then}~ u ~\mb{must~ be~ a~ polynomial}.
  \end{equation}
\begin{proof}
[\bf Proof of Theorem \ref{Liouville}\,.] \,
Now for $u\in \cL_{2s,\alpha}(\R^n\tm\R)$, we first prove that
\begin{equation}\label{liou2}
  \langle\mathcal{F}u,\psi\rangle=0\,\,\mbox{for any}\,\, \psi\in C_0^\infty(\{\mathbb{R}^n\times\mathbb{R}\}\setminus \{({\bf 0}, 0)\}).
\end{equation}
 Indeed, for any $\psi\in C_0^\infty(\{\mathbb{R}^n\times\mathbb{R}\}\setminus \{({\bf 0}, 0)\})$,
 the function $\frac{\psi(\xi,\rho)}{(-i\rho)^\alpha+|\xi|^{2s }}$ also belongs
 to $C_0^\infty(\{\mathbb{R}^n\times\mathbb{R}\}\setminus \{({\bf 0}, 0)\})$.
  Since the Fourier transform $\mathcal{F}$ is a homeomorphism
  from $\mathcal{S}(\R^n\tm\R)$ onto $\mathcal{S}(\R^n\tm\R)$,
  there must exist a function $\varphi\in \mathcal{S}(\mathbb{R}^n\times\mathbb{R})$ such that
\begin{equation*}
  \mathcal{F}\varphi(\xi,\rho)=\frac{\psi(\xi,\rho)}{(-i\rho)^\alpha+|\xi|^{2s }}.
\end{equation*}
 It follows from Lemma \ref{lm1} that $\lt[D_{\rm{right}}^\alpha+(-\Delta)^s\rt]\varphi\in \mathcal{S}_{2s,\alpha}(\R^n\tm\R)\subset L^1(\R^n\tm\R)$.
 Then a combination of  \eqref{tem} and  Lemma \ref{Fourier-derivative} yields  that
\begin{eqnarray*}
 \langle\mathcal{F}u,\psi\rangle&=&\langle u,\mathcal{F}^{-1}\psi\rangle\\
 &=& \langle u,\mathcal{F}^{-1}\lt\{\lt[(-i\rho)^\alpha+|\xi|^{2s }\rt]\mathcal{F}\varphi(\xi,\rho)\rt\}\rangle \\
 &=& \langle u,\mathcal{F}^{-1}\lt\{\mathcal{F}\lt[\lt(D_{\rm{right}}^\alpha+(-\Delta)^s\rt)\varphi\rt](\xi,\rho)\rt\}\rangle \\
  &=& \langle u,\lt[D_{\rm{right}}^\alpha+(-\Delta)^s\rt]\varphi\rangle \\
  &=& \int_{-\infty}^{+\infty} \int_{\mathbb{R}^n}u(x,t)\overline{\lt[D_{\rm{right}}^\alpha+(-\Delta)^s\rt]\varphi(x,t)}\operatorname{d}\!x\operatorname{d}\!t=0.
\end{eqnarray*}
Here, we applied the assumption that  $u$ satisfies identity \eqref{def-distribution soln} in the last line.
 Hence, the assertion \eqref{liou2}  holds. That is to say,
$\mathcal{F}u$ is supported at the origin. Now, the fact \eqref{fact} infers that  $u(x,t)$ is a
 polynomial of $x$ and $t$ . While the condition $u\in\mathcal{L}_{2s,\alpha}(\mathbb{R}^n\times\mathbb{R})$ with $0<\alpha <1$
implies that this polynomial exhibits at most a linear growth and is independent of $t$, taking  the form:
$$u(x,t)=C_0+\sum_{i=1}^{n}C_ix_i,$$
where $C_i$ with $i=0,\,1,\,...,\,n$ are constants.
In the case where $0 < s \leq \frac{1}{2}$, we deduce from $u\in \cL_{2s}(\R^n)$ that $C_i = 0$ for $i=1,\,2,\,...,\,n$. While in the case $\frac{1}{2} < s < 1$, the asymptotic condition \eqref{AA} of $u$ also implies that $C_i = 0$ for $i=1,\,2,\,...,\,n$, and hence
$$u(x,t)\equiv C.$$
This completes the proof of Theorem \ref{Liouville}\,.
\end{proof}
\subsection{The proof of the optimal estimate}

In this subsection,  we show that if $\varphi\in \mathcal{S}(\R^{n}\times \R)$, then  $\lt[D_{\rm{right}}^\alpha+(-\Delta)^s\rt]\varphi\in \mathcal{S}_{2s,\alpha}(\R^n\tm\R)$.
 More precisely, we derive the following decay estimates
\begin{equation}\label{optimal1}\Abs{\lt[D_{\rm{right}}^\alpha+(-\Delta)^s\rt]\varphi(x,t)}\leq \fr{C}{(1+\abs{x}^{n+2s})(1+\abs{t}^{1+\alpha})} ~\mb{in~} \R^n\tm\R.\end{equation}

Although the similar estimates have been given separately for the fractional Laplacian $(-\Delta)^s$ in \cite{S} and for the Marchaud fractional derivatives $D_{\rm{right}}^{\alpha}$ in \cite{SV}, for readers' convenience, we follow their ideas to provide a proof for the dual fractional parabolic operator $(-\Delta)^s+D_{\rm{right}}^{\alpha}$ in the following.
\begin{proof}[Proof of Lemma \ref{lm1}]
First we show that $\lt[D_{\rm{right}}^\alpha+(-\Delta)^s\rt]\varphi\in C^{\infty}(\R^n\times \R).$ A change of variables leads to
\begin{eqnarray*}
 \lt[D_{\rm{right}}^\alpha+(-\Delta)^s\rt]\varphi(x,t)= C_{\alpha}\jf_0^{+\infty}\fr{\varphi(x,t)-\varphi(x,\tau+t)}{\tau^{1+\alpha}}\operatorname{d}\!\tau
 +C_{n,s}P.V. \jf_{\R^{n}}\fr{\varphi(x,t)-\varphi(x+{y},t)}{\abs{y}^{n+2s}}\operatorname{d}\!{y}.
\end{eqnarray*}
Thus we easily obtain, for any multi-index $\gamma=(\gamma_1,...,\gamma_n) \mb{~and~} \beta\in\mathbb{N}$, 
\[\pl_x^{\gamma}\pl_t^\beta\lt\{\lt[D_{\rm{right}}^\alpha+(-\Delta)^s\rt]\varphi(x,t)\rt\}=\lt[D_{\rm{right}}^\alpha+(-\Delta)^s\rt]\lt(\pl_x^{\gamma}\pl_t^\beta\varphi(x,t)\rt).\]
It follows from $\varphi\in \mathcal{S}(\R^{n}\times \R)$ that
$\lt[D_{\rm{right}}^\alpha+(-\Delta)^s\rt]\varphi\in C^{\infty}(\R^n\times \R).$

In the sequel, 
we  estimate the part of Marchaud derivative $D_{\rm{right}}^\alpha\varphi$, that is,
\begin{equation}\label{decay}\Abs{D_{\rm{right}}^\alpha\varphi(x,t)}\leq\fr{C}{(1+\abs{x}^{m})(1+\abs{t}^{1+\alpha})},~\forall (x,t)\in\R^n\tm\R,\forall m>0.\end{equation}
This provides an optimal decay rate for $D_{\rm{right}}^\alpha\varphi(x,t)$ with respect to $t$. Now we  proceed to estimate separately for the following two cases.
\\[0.3cm]\indent
$\mathbf{Case~ 1}$. $\abs{t}\leq 1.$ On account of $\varphi\in\mathcal{S}(\R^n\tm\R),$ applying the differential mean value theorem, there exists $h\in(0,1)$ such that
\begin{equation}\label{mean}
\Abs{\varphi(x,t)-\varphi(x,\tau+t)}\leq \Abs{\partial_t\varphi(x,t+h\tau)}|\tau|.
\end{equation}
Furthermore, we have the decay estimates
$$\abs{\varphi(x,t)},\abs{\partial_t\varphi(x,t)}\leq\fr{C}{1+\Abs{x}^{m}},\forall m>0.$$
Therefore, for any $k,m> 0,$ there holds that
\begin{eqnarray}\label{t<1}
 \Abs{D_{\rm{right}}^\alpha\varphi(x,t)}&\leq& \nonumber C_{\alpha}\jf_0^{+\infty}\fr{\Abs{\varphi(x,t)-\varphi(x,\tau+t)}}{\tau^{1+\alpha}}\operatorname{d}\!\tau
 \\ \nonumber
 &\leq&C_{\alpha}\lt\{\jf_0^{1}\fr{\Abs{\partial_t\varphi(x,t+h\tau)}}{\tau^{\alpha}}\operatorname{d}\!\tau+
 \jf_1^{+\infty}\fr{\Abs{\varphi(x,t)}+\Abs{\varphi(x,\tau+t)}}{\tau^{1+\alpha}}\operatorname{d}\!\tau\rt\}
\\
 &\leq&\fr{C}{(1+\abs{x}^{m})(1+|t|^{k})}.\end{eqnarray}
Here we have used the condition that
  $|t|\leq 1.$
\\[0.3cm]
\indent
$\mathbf{Case~ 2}$. $\abs{t}>1.$ Without loss of generality, we may assume $t<-1$ and write
\begin{eqnarray}\label{aa}
\nonumber&& \jf_0^{+\infty}\fr{\Abs{\varphi(x,t)-\varphi(x,\tau+t)}}{\tau^{1+\alpha}}\operatorname{d}\!\tau\\\nonumber
 &=&\jf_0^{-\frac{t}{2}}\fr{\Abs{\varphi(x,t)-\varphi(x,\tau+t)}}{\tau^{1+\alpha}}\operatorname{d}\!\tau+\jf_{-\frac{t}{2}}^{+\infty}\fr{\Abs{\varphi(x,t)-\varphi(x,\tau+t)}}{\tau^{1+\alpha}}\operatorname{d}\!\tau
=:I+II.\end{eqnarray}
Taking into account  of $\varphi, \partial_t\varphi\in\mathcal{S}(\R^n\tm\R),$ we derive
\begin{equation}\label{Schw}\abs{\varphi(x,t)},\abs{\partial_t\varphi(x,t)}\leq\fr{C}{\lt(1+\Abs{x}^{m}\rt)(1+|t|^{k})},\forall m,k> 0.\end{equation}
Which, together with   \eqref{mean} and  $\alpha<1, \abs{t}\leq 1$ , 
deduce that
\begin{eqnarray}\label{bb}\nonumber
I&=& \jf_0^{-\frac{t}{2}}\fr{\Abs{\varphi(x,t)-\varphi(x,\tau+t)}}{\tau^{1+\alpha}}\operatorname{d}\!\tau\\  \nonumber
&\leq& \fr{C}{\lt(1+\Abs{x}^{m}\rt)}\jf_0^{-\frac{t}{2}}\fr{1}{(1+|t+h\tau|^{k+1-\alpha})\tau^\alpha}\operatorname{d}\!\tau.\\ \nonumber
 &\leq&
  \fr{C}{\lt(1+\Abs{x}^{m}\rt)\lt(1+\abs{t}^{k}\rt)},
 \end{eqnarray}
holds for any $m,k> 0.$ Where we have utilized  the fact that $\abs{t+h\tau}\thicksim\abs{t} $ for $\tau\in(0,-\frac{t}{2}).$

On the other hand, it follows from \eqref{Schw} and the condition $\abs{t}\geq 1$ that, for  $\forall~k,m>0,$
\begin{eqnarray*}\label{cc}\nonumber
II&\leq& |\varphi(x,t)|\jf_{-\frac{t}{2}}^{+\infty}\fr{1}{\tau^{1+\alpha}}\operatorname{d}\!\tau+\jf_{-\frac{t}{2}}^{+\infty}\fr{\Abs{\varphi(x,t+\tau)}}{\tau^{1+\alpha}}\operatorname{d}\!\tau\\  \nonumber
&\leq& \fr{C}{\lt(1+\Abs{x}^{m}\rt)\lt(1+|t|^{k}\rt)}\fr{1}{|t|^{\alpha}}+\fr{C}{\abs{t}^{1+\alpha}\lt(1+\Abs{x}^{m}\rt)}\jf_{-\infty}^{+\infty}\fr{1}{(1+|t+\tau|^{1+k})}\operatorname{d}\!\tau.\\
 &\leq&
  \fr{C}{\lt(1+\Abs{x}^{m}\rt)\lt(1+\abs{t}^{1+\alpha}\rt)}.
 \end{eqnarray*}
 Collecting these two terms, we get \begin{equation}\label{t>1}\Abs{D_{\rm{right}}^\alpha\varphi(x,t)}\leq\fr{C}{(1+\abs{x}^{m})(1+\abs{t}^{1+\alpha})},~\mb{for~} |t|\geq 1, x\in\R^n,\forall m>0.\end{equation}
Therefore, by considering   \eqref{t<1} and \eqref{t>1}, we have verified  estimate \eqref{decay}. 

 By applying  a similar procedure  to the part of  fractional Laplacian $(-\Delta)^s\varphi$, one can derive an optimal decay rate with respect to $x$, $\frac{1}{1+\abs{x}^{n+2s}}$, more precisely,
 \begin{equation}\label{Schw1}\Abs{(-\Delta)^{s}\varphi(x,t)}\leq\fr{C}{(1+\abs{x}^{n+2s})(1+\abs{t}^{k})},~\forall (x,t)\in\R^n\tm\R, \forall k>0.\end{equation}
  Here, we omit the specific details of the proof.

In conclusion, together with   \eqref{bb} and \eqref{Schw1}, we obtain estimate \eqref{optimal1}  
and   thus complete the proof.
\end{proof}

\section{An application of the Liouville Theorem}\label{sec4}
In this final section,  we prove the equivalence of the pseudo-differential equation  \begin{equation}\label{pdeq}
  \partial_t^{\alpha} u =f\,\, \mb{in}\,\,  \mathbb{R},
\end{equation}
 and the integral equation
\begin{equation}\label{ieq}
  u(t)=c+ c_{\alpha}\int_{-\infty}^{t}
  \frac{f(\tau)}{{(t-\tau)^{1-\alpha}}}\operatorname{d}\!\tau.
\end{equation}
Based on the "Fractional Fundamental Theorem of Calculus" established in \cite{BMT} and \cite{SV}, if $u$ satisfies  the integral identity \eqref{ieq}, then it is a solution of  the pseudo-differential equation \eqref{pdeq}. Therefore,
it suffices to show that the solution of \eqref{pdeq} can be represented by the integral formula \eqref{ieq}.

Furthermore, if the nonlinearity $f$  depends also on $u$, under the following  assumption:
 \begin{equation}\label{asmf}f(t,u)\geq C_2>0 \;\mbox{ for } \;u\geq C_1>0, \;\mbox{ uniformly in } t\in  \mathbb{R},\end{equation}
 we will  show that the constant $c$  is exactly $0$ in \eqref{ieq}.
\begin{proof}[Proof of Theorem \ref{equivalence}]
For each fixed $t\in\R.$ Choose $R>-t$ and define
\[v_R(s):=c_{\alpha}\int_{-\infty}^s\frac{f_R(\tau)}{(s-\tau)^{1-\alpha}}\operatorname{d}\!\tau, \,\, \mb{with}\,\, f_R(\tau)=\left\{\begin{array}{ll}f(\tau), &\tau\in {(-R,t)},\\
0,&\tau\in(-\infty,-R].\end{array}\right.\]
Clearly,
\begin{equation}\label{vr}\partial_t^{\alpha}v_R= f_R\,\, \mb{in}\,\, (-R,t].\end{equation}

Assume that $u$ is a nonnegative solution of pseudo-differential equation \eqref{pdeq}. Denote $$w_R:=u-v_R,$$ then by \eqref{pdeq} and \eqref{vr},  $w_R$ solves
\begin{equation}\label{weq}
\left\{
\begin{array}{ll}
    \partial_t^{\alpha} w_R=0 ,~   & \mb{in}\,\,  (-R,t], \\
  w_R=u\geq 0 , ~ &  \mb{in}\,\, (-\infty,-R].
\end{array}
\right.
\end{equation}
It follows from the Maximum Principle for the fractional Marchaud derivative $\partial_t^\alpha$ that
\begin{equation}\label{wr}
w_R\geq 0 \,\, \mb{in}\,\, (-R,t].
\end{equation}

Indeed, if \eqref{wr} is not valid, then there exists a point $t_0\in(-R,t]$ such that
\[w_R(t_0)=\inf\limits_{s\in(-\infty,t]}w_R(s)<0.\]
It follows by the definition of the one-sided nonlocal operator $\partial_t^\alpha$, we  derive
\[\partial_t^{\alpha}w_R(t_0)=C_{\alpha}\int_{-\infty}^{t_0}\fr{w_R(t_0)-w_R(\tau)}{(t_0-\tau)^{1+\alpha}}\operatorname{d}\!\tau<0.\]
This contradicts the first equation in \eqref{weq}. Thus, \eqref{wr} must be  valid.

In particular, we have
\[w_R(t)\geq 0,\, t\in\R,\]
that is,
\[u(t)\geq c_{\alpha}\int_{-\infty}^t\fr{f_R(\tau)}{(t-\tau)^{1-\alpha}}\operatorname{d}\!\tau.\]

Now Let $R\to\infty,$  then $f_R$ increases towards $f$ due to the fact that $f\geq 0.$ Thus, by the Levi's monotone convergence theorem, we obtain
\[u(t)\geq c_{\alpha}\int_{-\infty}^t\fr{f(\tau)}{(t-\tau)^{1-\alpha}}\operatorname{d}\!\tau.\]
This implies that the integral on the right hand side is well-defined, we call it $v(t)$.

Denote \[w(t)=u(t)-v(t),\]
then $w(t)\in \cL_{\alpha}(\R)$ satisfying
\begin{equation}
\left\{
\begin{array}{ll}
    \partial_t^{\alpha} w=f-  \partial_t^{\alpha} v=0 ,~   & \mb{in}\,\,  \R, \\
  w\geq  0 , ~ &  \mb{in}\,\, \R.
\end{array}
\right.
\end{equation}
Therefore, by the Liouville theorem for Marchaud fractional derivatives, Corollary \ref{Liouville-M}, we derive
\begin{equation*}
w(t)\equiv c,\, t\in\R,
\end{equation*}
 for any nonnegative constant $c$.

 Consequently,
 \begin{equation}\label{iequ}
u(t)=c+c_{\alpha}\int_{-\infty}^t\fr{f(\tau)}{(t-\tau)^{1-\alpha}}\operatorname{d}\!\tau,\, t\in\R.
\end{equation}

 Furthermore,  if the nonlinearity $f:=f(t,u)$ satisfies  \eqref{asmf}, then we claim that $c=0$.

 Indeed, if $c>0$, then $u\geq c$ due to the fact  that $f\geq 0.$ It follows from  \eqref{asmf} and \eqref{iequ}  that
 \begin{equation*}
u(t)\geq c_{\alpha}\int_{-\infty}^t\fr{f(\tau,u)}{(t-\tau)^{1-\alpha}}\operatorname{d}\!\tau\geq c_1\int_{-\infty}^t\fr{1}{(t-\tau)^{1-\alpha}}\operatorname{d}\!\tau=\infty,
\end{equation*}
which  contradicts the local  boundedness of  $u$. Thus, \eqref{iequ} becomes
 \begin{equation*}
u(t)=c_{\alpha}\int_{-\infty}^t\fr{f(\tau)}{(t-\tau)^{1-\alpha}}\operatorname{d}\!\tau,\, t\in\R.
\end{equation*}
Hence we complete the proof of Theorem \ref{equivalence}.
\end{proof}
{\bf{Funding:}}
The work of the first author is partially sponsored by the National Natural Science Foundation of China (Grant No.12501145, W2531006, 12250710674 and 12031012), the Natural Science Foundation of Shanghai (No.25ZR1402207),   the China Postdoctoral Science Foundation (No.2025T180838 and 2025M773061), the Postdoctoral Fellowship Program of CPSF (No. GZC20252004), and the Institute of Modern Analysis-A Frontier Research Center of Shanghai.

The work of
the second author is partially supported by the National Natural Science Foundation of China (NSFC Grant No.12101452).

The work of the third author is partially supported by the National Natural Science Foundation of China (NSFC Grant No.12071229).

{\bf Ethical approval:} Not applicable.

{\bf Informed consent:} Not applicable.

{\bf Author Contributions:} The authors have accepted responsibility for the entire content of this manuscript and approved its submission.

{\bf Data Availability Statement:} Not applicable.

{\bf Conflict of Interest:} Not applicable.

\bigskip

{\em Authors' Addresses  and E-mails:}
\medskip

{Yahong Guo}

School of Mathematical Sciences

Shanghai Jiaotong University

Shanghai, 200240, P.R. China

{yhguo@sjtu.edu.cn}
\medskip

Lingwei Ma

School of Mathematical Sciences

 Nankai University, Tianjin, 300071, P.R. China

{mlw1103@outlook.com}

\medskip

Zhenqiu Zhang

{School of Mathematical Sciences and LPMC

Nankai University

Tianjin, 300071, P.R. China

{zqzhang@nankai.edu.cn}

\end{document}